\documentclass{article}
\usepackage{amsmath,amsthm,amscd,amssymb}
\usepackage[mathscr]{eucal}
\usepackage{amssymb}
\usepackage{latexsym}

\topmargin=0mm \textheight=220mm

\theoremstyle{definition}
\newtheorem{Def}{Definition}[section]
\newtheorem{Thm}[Def]{Theorem}

\numberwithin{equation}{section}

\title{On $p$-divisibility of Fourier coefficients of Hermitian modular forms}

\date{}
\begin{document}

\maketitle
\author{Shoyu Nagaoka}
\footnote{
Shoyu Nagaoka,\quad shoyu1122.sn@gmail.com\\
emeritus professor of Kindai University, 
Osaka 545-0001, Japan
}

\begin{abstract}
\noindent
We describe the $p$-divisibility transposition for the Fourier coefficients
of Hermitian modular forms. The results show that the same phenomenon
as that for Siegel modular forms
holds for Hermitian modular forms.
\end{abstract}

\noindent
\textbf{Key Words}\; Congruence for modular forms, Hermitian modular forms\\
\textbf{Mathematics Subject Classification}\; 11F33, Secondary 11F55\\


\section{Introduction}
\label{sec1}
This paper is a continuation of \cite{N}. In \cite{N}, we studied how the $p$-divisibility
of Fourier coefficients of Siegel modular forms shifts with changes in the
degree by
the Siegel operator $\Phi$.
As an application, provide a note on the $p$-divisibility of Ramanujan's $\tau$-function.

Let $F$ be a Siegel modular form with the Fourier expansion $F(Z)=\sum a(F,T)q^T$
whose Fourier coefficients $a(F,T)$ are $p$-integral. We observed the following
$p$-divisiblity transpositions:
\begin{align*}
& {\rm (I)}_p\qquad \text{All $a(F,T^{(n)})$ for $T^{(n)}>0$ are divisible by $p$.}\\
&\hspace{4cm} \Downarrow\; \Phi\\
& {\rm (II)}_p\qquad \text{``Most'' $a(\Phi(F),T^{(n-1)})$ for $T^{(n-1)}>0$ are divisible by $p$.}\\
&\hspace{4cm} \Downarrow\; \Phi\\
& {\rm (III)}_p\qquad \text{Certain ``special'' $a(\Phi^2(F),T^{(n-2)})$ for $T^{(n-2)}>0$ are divisible by $p$.}
\end{align*}
The following is an example of such a sequence of modular forms that behaves 
in this manner,
as described in \cite{N}:
$$
{\rm (I)}_{p}:\;\mathcal{E}^{(3)}(Z,\Delta)\overset{\Phi}{\longrightarrow}
{\rm (II)}_{p}:\;\mathcal{E}^{(2)}(Z,\Delta)
                                \overset{\Phi}{\longrightarrow} 
{\rm (III)}_{p}:\; \Delta\qquad (p=23),
$$
where $\Delta=\sum \tau(n)q^n$ is the cusp form of weight $12$, and the Fourier coefficient
$\tau (n)$ is the Ramanujan $\tau$-function (for precise definition of modular forms
$\mathcal{E}^{(n)}(Z,\Delta)$, see \cite{N}, Proposition 5).

One of the main results of \cite{N} is the following.
\vspace{1mm}
\\
\textbf{Result in the case of Siegel modular forms}\;(\cite{N}, Theorem M-3). 
{\it
Assume that $n$ is an even positive integer and $p$ is a prime number satisfying $p>n+3$
and $p \equiv (-1)^{n/2} \pmod{4}$. We set $k=(n+p-1)/2$. Then, there are Siegel modular
forms $F_k^{(n+1)}$ and $F_k^{(n)}$ of weight $k$ and respective degrees $n+1$ and $n$
satisfying $\Phi(F_k^{(n+1)})=F_k^{(n)}$, and
\vspace{1mm}
\\
{\rm S-(I)}${}_p{\rm :}\;\;a(F_k^{(n+1)},T^{(n+1)}) \equiv 0 \pmod{p}$ for all $T^{(n+1)}>0$,
\\
{\rm S-(II)}${}_p{\rm :}\;\;a(F_k^{(n)},T^{(n)}) \equiv 0 \pmod{p}$ for $T^{(n)}>0$ with 
${\rm det}(T^{(n)}) \not\equiv 0 \pmod{p}$.
}
\vspace{2mm}
\\
\quad In this paper, we report that similar results hold for the case of Hermitian modular forms.
Let $\boldsymbol{K}$ be an imagianry quadratic field. We consider a modular form $G(Z)$
for the Hermitian modular group $\Gamma_{\boldsymbol{K}}^{(m)}$ in $SU(m,m)$ with
Fourier expansion 
$$
G(Z)=\sum_{0\leq H\in \Lambda_m(\mathcal{O}_{\boldsymbol{K}})}a(G,H)q^H
$$
(for precise definition, see $\S$\,2). The main result of this paper is as follows.
\vspace{2mm}
\\
\textbf{Theorem.}\; {\it Assume that $m \equiv 2 \pmod{4}$ and $p$ is a prime
number satisfying $p>m+3$ and $D_{\boldsymbol{K}} \not\equiv 0 \pmod{p}$
($D_{\boldsymbol{K}}$ is the discriminant of $\boldsymbol{K}$). We set $k=m+p-1$.
Then, there are Hermitian modular forms $G_k^{(m+1)}$ and $G_k^{(m)}$ of weight $k$
and respective degrees $m+1$ and $m$ satisfying $\Phi(G_k^{(m+1)})=G_k^{(m)}$, and
\vspace{1mm}
\\
{\rm H-(I)}${}_p{\rm :}\;\;a(G_k^{(m+1)},H^{(m+1)}) \equiv 0 \pmod{p}$ for all $H^{(m+1)}>0$,
\\
{\rm H-(II)}${}_p{\rm :}\;a(G_k^{(m)},H^{(m)}) \equiv 0 \pmod{p}$ for $H^{(m)}>0$ with 
${\rm det}(H^{(m)}) \not\equiv 0 \pmod{p}$.
}
\vspace{1mm}
\\
\quad In the case $m=2$, we can construct examples of these forms by the Hermitian Eisenstein series.
\vspace{2mm}
\\
\textbf{Example.}\; Let $p>5$ be a prime number satisfying 
$$
h_{\boldsymbol{K}}\not\equiv 0 \pmod{p}\quad \text{and}\quad
\chi_{\boldsymbol{K}}(p)=-1,
$$
where $h_{\boldsymbol{K}}$ is the class number of $\boldsymbol{K}$ and
$\chi_{\boldsymbol{K}}$ is the quadratic Dirichlet character corresponding to
$\boldsymbol{K}/\mathbb{Q}$.
Let $E_{k,\boldsymbol{K}}^{(m)}$ denote the Hermitian Eisenstein series of weight $k$ and degree $m$
for $\boldsymbol{K}$ ( for definition, see $\S$\,\ref{sec2.2}). Then,
\vspace{1mm}
\\
{\rm Ex-H-(I)}${}_p{\rm :}\;\;a(E_{p+1,\boldsymbol{K}}^{(3)},H^{(3)}) \equiv 0 \pmod{p}$ for all $H^{(3)}>0$.
\\
{\rm Ex-H-(II)}${}_p{\rm :}$\\
\hspace{8mm} $a(E_{p+1,\boldsymbol{K}}^{(2)},H^{(2)}) \equiv 0 \pmod{p}$ for $H^{(2)}>0$ with 
${\rm det}(H^{(2)}) \not\equiv 0 \pmod{p}$,
\vspace{1mm}
\\
Additionally,
\vspace{1mm}
\\
{\rm Ex-H-(III)}${}_p{\rm :}$\\
\hspace{8mm} $a(E_{p+1,\boldsymbol{K}}^{(1)},h) \equiv 0 \pmod{p}$ for $h\in\mathbb{Z}_{>0}$ with 
$\sigma_1(h) \equiv 0 \pmod{p}$,
\vspace{1mm}
\\
where $\sigma_1(n)=\sum_{0<d|n}d$. (see $\S$\,\ref{sec3.3}).
\section{Hermitian modular forms}
Let $m$ be a positive integer and $\boldsymbol{K}=\mathbb{Q}(\sqrt{-D_{\boldsymbol{K}}})$
be an imaginary quadratic field with discriminant $-D_{\boldsymbol{K}}<0$.
We denote the ring of integers of $\boldsymbol{K}$ by $\mathcal{O}_{\boldsymbol{K}}$.
Let $\chi_{\boldsymbol{K}}$ be the quadratic Dirichlet character of conductor $D_{\boldsymbol{K}}$
corresponding to the extension $\boldsymbol{K}/\mathbb{Q}$. 
We denote the idele class character corresponding to
$\chi_{\boldsymbol{K}}$ by 
$\underline{\chi}_{\boldsymbol{K}}=\prod_v\underline{\chi}_{\boldsymbol{K},v}$.
\subsection{Hermitian modular forms and their Fourier expansion}
\label{sec2.1}
For a $\mathbb{Q}$-algebra $R$, the group $SU(m,m)(R)$ is given as
$$
SU(m,m)(R)
=\left\{g\in SL_{2m}(R\otimes_{\mathbb{Q}}\boldsymbol{K})\mid
g^*\left(\begin{smallmatrix} 0_m & -1_m \\ 1_m & 0_m \end{smallmatrix}\right) g=
\left(\begin{smallmatrix} 0_m & -1_m \\ 1_m & 0_m \end{smallmatrix}\right) \right\},
$$
where $g^*={}^t\overline{g}$. We set 
$$
\Gamma_{\boldsymbol{K}}^{(m)}=SU(m,m)(\mathbb{Q})\cap SL_{2m}(\mathcal{O}_{\boldsymbol{K}}).
$$
We denote the space of Hermitian modular forms
of weight $k$ for $\Gamma_{\boldsymbol{K}}^{(m)}$ by $M_k(\Gamma_{\boldsymbol{K}}^{(m)})$. 
Any modular form $F$ in 
$M_k(\Gamma_{\boldsymbol{K}}^{(m)})$ has a Fourier expansion of the form
$$
F(Z)=\sum_{0\leq H\in\Lambda_m(\mathcal{O}_{\boldsymbol{K}})}a(F,H)q^H,\;\;
q^H=\text{exp}(2\pi i\text{tr}(HZ)),\;\;Z\in\mathcal{H}_m,
$$
where
$$
\mathcal{H}_m=\{Z\in M_m(\mathbb{C})\mid \tfrac{1}{2i}(Z-Z^*)>0\}\;\;(\text{the Hermitian upper half space}),
$$
$$
\Lambda_m(\mathcal{O}_{\boldsymbol{K}})
=\{H=(h_{jl})\in M_m(\boldsymbol{K})\mid H^*=H,\,h_{jj}\in\mathbb{Z},\,\,\sqrt{-D_{\boldsymbol{K}}}\,h_{jl}\in
\mathcal{O}_{\boldsymbol{K}}\}.
$$
We also set $\Lambda_m^+(\mathcal{O}_{\boldsymbol{K}})=
\{H\in\Lambda_m(\mathcal{O}_{\boldsymbol{K}})\mid H>0\}$. For a subring $R$ of $\mathbb{C}$,
we set
$$
M_k(\Gamma_{\boldsymbol{K}}^{(m)})_R:=
\{F\in M_k(\Gamma_{\boldsymbol{K}}^{(m)})\mid a(F,H)\in R\;\; \text{for all}\;\; H\in \Lambda_m(\mathcal{O}_{\boldsymbol{K}})\}.
$$
As in the case of Siegel modular forms, we can define the Siegel operator
$\Phi :M_k(\Gamma_{\boldsymbol{K}}^{(m)}) \longrightarrow M_k(\Gamma_{\boldsymbol{K}}^{(m-1)})$.
For the Fourier coefficients, the following relation holds:
$$
a(\Phi(G),H^{(m-1)})=a\left(G,
     \left(\begin{smallmatrix} H^{(m-1)} & 0 \\ 0 & 0 \end{smallmatrix}\right) \right).
$$
\subsection{Hermitian Eisenstein series}
\label{sec2.2}
We set
$$
\Gamma_{\boldsymbol{K},\infty}^{(m)}=
\{\left(\begin{smallmatrix} A & B \\ C & D \end{smallmatrix}\right)\mid C=0_m\}.
$$
For a positive even integer $k>2m$, we define a Hermitian Eisenstein series of weight $k$ by
$$
E_{k,\boldsymbol{K}}^{(m)}(Z)
=\sum_{\left(\begin{smallmatrix} * & * \\ C & D \end{smallmatrix}\right)\in
\Gamma_{\boldsymbol{K},\infty}^{(m)}\backslash \Gamma_{\boldsymbol{K}}^{(m)}} 
\text{det}(CZ+D)^{-k},\quad Z\in\mathcal{H}_m.
$$
The Eisenstein series $E_{k,\boldsymbol{K}}^{(m)}(Z)$ is a typical example of
a Hermitian modular form; in fact, 
$E_{k,\boldsymbol{K}}^{(m)}(Z)\in M_k(\Gamma_{\boldsymbol{K}}^{(m)})_{\mathbb{Q}}$.
Moreover, $\Phi(E_{k,\boldsymbol{K}}^{(m)})=E_{k,\boldsymbol{K}}^{(m-1)}$ holds.
In the following, certain expression of the Fourier coefficient $a(E_{k,\boldsymbol{K}}^{(m)};H)$
is provided.

For a prime number $q$, we set 
$\mathcal{O}_{\boldsymbol{K},q}=\mathcal{O}_{\boldsymbol{K}}\otimes_{\mathbb{Z}}\mathbb{Z}_q$ and set
\begin{align*}
& \Lambda_n(\mathcal{O}_{\boldsymbol{K},q})\\
&=\{H=(h_{jl})\in M_m(\boldsymbol{K}\otimes_{\mathbb{Q}}\mathbb{Q}_q)\mid 
     H^*=H,\,h_{jj}\in\mathbb{Z}_q,\,\sqrt{-D_{\boldsymbol{K}}}h_{jl}\in\mathcal{O}_{\boldsymbol{K},q}\}
\end{align*}

Let $H\in\Lambda_m(\mathcal{O}_{\boldsymbol{K}})$ with $H\geq 0$ and set $r=\text{rank}_{\boldsymbol{K}}H$.
For each prime number $q$, we take $U_q\in GL_m(\mathcal{O}_{\boldsymbol{K},q})$, such that
$$
H[U_q]=\begin{pmatrix} H'_q & 0 \\ 0 & 0  \end{pmatrix}
$$
with $H'_q\in \Lambda_r(\mathcal{O}_{\boldsymbol{K},q})$. Here, for 
$A,B\in \text{Res}_{\boldsymbol{K}/\mathbb{Q}}M_m$, we define
$$
A[B]:=B^*AB.
$$
For $H\in \Lambda_r(\mathcal{O}_{\boldsymbol{K},q})$ with $\text{det}(H)\ne 0$, we denote
the polynomial given in Ikeda \cite{Ikeda}, $\S$\,2 by $\mathcal{F}_q(H,X)\in\mathbb{Z}[X]$.
Then, the polynomial $\mathcal{F}(H'_q,X)$ does not depend on the choice of $U_q$.
Therefore, we denote it by $\mathcal{F}_q(H,X)$. For $H\in \Lambda_r(\mathcal{\mathcal{O}}_{\boldsymbol{K}})$
(resp. $\in \Lambda_r(\mathcal{O}_{\boldsymbol{K,q}})$) with $\text{det}(H)\ne 0$, we define
$$
\gamma(H)=(-D_{\boldsymbol{K}})^{[r/2]}\text{det}(H)\in\mathbb{Z}\;\;(\text{resp.} \in\mathbb{Z}_q).
$$
The Fourier coefficient of the Hermitian Eisenstein series $E_{k,\boldsymbol{K}}^{(m)}$ has the
following description (for example, Ikeda \cite{Ikeda}).
\begin{Thm} 
\label{Ikeda1}
{\it
Let $H\in \Lambda_m(\mathcal{O}_{\boldsymbol{K}})$ with $H\geq 0$ and set
$r={\rm rank}_{\boldsymbol{K}}(H)$. Then, the Fourier coefficient $a(E_{k;\boldsymbol{K}}^{(m)},H)$
has the following expression:
$$
 a(E_{k,\;\boldsymbol{K}}^{(m)},H)
 =2^r\cdot \prod_{i=0}^{r-1}\left(-\frac{B_{k-i,\chi_{\boldsymbol{K}}^i}}{k-i}\right)^{-1}\cdot
       \prod_{q:{\rm prime}} \mathcal{F}_q(H,q^{k-2r} ).
$$
 Here $B_{*,\chi_{\boldsymbol{K}}^i}$ is the generalized Bernoulli number with  the character
 $\chi_{\boldsymbol{K}}^i$, and $B_{*,\chi_{\boldsymbol{K}}^i}=B_*$
 (the ordinary Bernoulli number) if $i$ is even.
}
\end{Thm}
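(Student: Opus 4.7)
This formula is Theorem 2.1 of Ikeda \cite{Ikeda}, but since it underlies everything that follows I would present the proof as three steps carried out in order. The overall plan is to unfold the Eisenstein series, factor the resulting Fourier coefficient as an Euler product, and recognize the global constants as special values of Dirichlet $L$-functions which then convert into generalized Bernoulli numbers.

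First, I would treat the nonsingular case $r = m$. Decompose $\Gamma_{\boldsymbol{K},\infty}^{(m)} \backslash \Gamma_{\boldsymbol{K}}^{(m)}$ into $(C,D)$-cosets and apply Poisson summation to the sum over the bottom row, then integrate the Bruhat cell over $\mathrm{Re}\,Z$ to extract the $H$-th Fourier coefficient. This expresses $a(E_{k,\boldsymbol{K}}^{(m)}, H)$ as the product of an archimedean integral (a confluent hypergeometric integral of matrix argument, absolutely convergent since $k > 2m$) and a finite-adelic sum. The archimedean integral is evaluated by Shimura's formula and produces the $\Gamma$-factor needed both to contribute the constant $2^m$ and to balance the functional-equation conversion below.

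Second, I would evaluate the finite part. For each prime $q$ the local factor is by definition the local Siegel series attached to $H$ at $q$; by Ikeda \cite{Ikeda}, $\S$\,2, this series equals $\mathcal{F}_q(H, q^{k-2m})$ divided by $\prod_{i=0}^{m-1} (1 - \underline{\chi}_{\boldsymbol{K},q}^{i}(q)\, q^{-(k-i)})$. Taking the Euler product over all primes, the denominators assemble into $\prod_{i=0}^{m-1} L(k-i, \chi_{\boldsymbol{K}}^{i})$. Using $\zeta(1-n) = -B_n/n$ and $L(1-n, \chi) = -B_{n,\chi}/n$ together with the functional equation of the Dirichlet $L$-function, these $L$-values convert cleanly into $\prod_{i=0}^{m-1}\bigl(-B_{k-i,\chi_{\boldsymbol{K}}^{i}}/(k-i)\bigr)^{-1}$, proving the nonsingular case.

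For singular $H$ with $r = \mathrm{rank}_{\boldsymbol{K}}(H) < m$, I would reduce to the nonsingular case via $\Phi(E_{k,\boldsymbol{K}}^{(m)}) = E_{k,\boldsymbol{K}}^{(m-1)}$ together with the Fourier-coefficient compatibility recorded in $\S$\,2.1, iterating $m-r$ times until $H$ is supported on a nonsingular rank-$r$ block; the $GL_m(\mathcal{O}_{\boldsymbol{K},q})$-invariance of $\mathcal{F}_q(H, X)$ built into the construction via $H[U_q]$ ensures the result matches the claimed formula with $q^{k-2r}$ in place of $q^{k-2m}$ and with only $r$ Bernoulli factors surviving. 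The principal technical obstacle throughout is the evaluation of the local Siegel series at primes dividing $D_{\boldsymbol{K}}$, where the Hermitian lattice is ramified and the naive Euler factor must be modified; this is precisely the difficulty that Ikeda's uniform definition of $\mathcal{F}_q(H,X)$ is engineered to absorb, so the proof ultimately reduces to importing his local computation wholesale.
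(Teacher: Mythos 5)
The paper offers no proof of this statement at all: Theorem \ref{Ikeda1} is quoted from Ikeda \cite{Ikeda} with a bare citation, so there is no internal argument to compare yours against. Your sketch is the standard route by which the cited formula is actually established --- unfolding into $(C,D)$-cosets, the confluent hypergeometric archimedean integral, the Euler product of local Siegel series normalized by $\prod_{i}(1-\chi_{\boldsymbol{K}}^{i}(q)q^{i-k})$, and the functional equation converting $\prod_{i}L(k-i,\chi_{\boldsymbol{K}}^{i})^{-1}$ into the Bernoulli quotients together with the factor $2^{r}$ --- and since you ultimately defer to Ikeda's local computation of $\mathcal{F}_q(H,X)$, your proposal is consistent with how the paper treats the result. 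One caveat concerns your handling of the singular case: reducing by iterated application of $\Phi$ requires $H$ to be \emph{globally} $GL_m(\mathcal{O}_{\boldsymbol{K}})$-equivalent to $\left(\begin{smallmatrix} H' & 0 \\ 0 & 0 \end{smallmatrix}\right)$, which can fail when the class number of $\boldsymbol{K}$ exceeds one, because the radical of $H$ is then a projective but not necessarily free $\mathcal{O}_{\boldsymbol{K}}$-summand. This is precisely why the paper (following Ikeda) defines $\mathcal{F}_q(H,X)$ through a local block-diagonalization $H[U_q]$ with $U_q\in GL_m(\mathcal{O}_{\boldsymbol{K},q})$ at each prime separately, together with the observation that the result is independent of the choice of $U_q$. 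The singular case should therefore be phrased through that local definition rather than a global Siegel-operator descent; with that adjustment your outline matches the cited proof.
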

\section{$p$-divisibility of Hermitian modular forms}
\label{sec3}
As stated in the Introduction, we can construct Hermitian modular forms that describe the transposition
of $p$-divisibility of Fourier coefficients. We denote
the local ring of $p$-integral rational numbers by 
$\mathbb{Z}_{(p)}=\mathbb{Q}\cap\mathbb{Z}_p$.
\begin{Thm} 
\label{main}
{\it
Assume that $m \equiv 2 \pmod{4}$ and $p$ is a prime number satisfying $p>m+3$
and $D_{\boldsymbol{K}} \not\equiv 0\pmod{p}$. We set $k=m+p-1$. Then, there are
modular forms $G_k^{(m+1)}\in M_k(\Gamma_{\boldsymbol{K}}^{(m+1)})_{\mathbb{Z}_{(p)}}$
and $G_k^{(m)}\in M_k(\Gamma_{\boldsymbol{K}}^{(m)})_{\mathbb{Z}_{(p)}}$
satisfying $\Phi(G_k^{(m+1)})=G_k^{(m)}$ and
\vspace{2mm}
\\
{\rm H-(I)}${}_p{\rm :}\;\;a(G_k^{(m+1)},H^{(m+1)}) \equiv 0 \pmod{p}$ for all 
$H^{(m+1)}\in \Lambda_{m+1}^+(\mathcal{O}_{\boldsymbol{K}})$.
\vspace{1mm}
\\
{\rm H-(II)}${}_p{\rm :}\;\;a(G_k^{(m)},H^{(m)}) \equiv 0 \pmod{p}$ for
$H^{(m)}\in \Lambda_{m}^+(\mathcal{O}_{\boldsymbol{K}})$ with ${\rm det}(H^{(m)})\not\equiv 0 \pmod{p}$.}
\end{Thm}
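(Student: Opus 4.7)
The plan is to take $G_k^{(m+1)} := E_{k,\boldsymbol{K}}^{(m+1)}$ and $G_k^{(m)} := E_{k,\boldsymbol{K}}^{(m)}$, the Hermitian Eisenstein series of weight $k=m+p-1$, possibly after multiplication by a common $p$-integral constant so that both lie in $M_k(\Gamma_{\boldsymbol{K}}^{(\cdot)})_{\mathbb{Z}_{(p)}}$. Convergence of the Eisenstein series requires $k>2(m+1)$, which is exactly $p>m+3$; the weight $k$ is even because $m$ is even and $p$ is odd; and the relation $\Phi(E_{k,\boldsymbol{K}}^{(m+1)})=E_{k,\boldsymbol{K}}^{(m)}$ is recorded in \S\,\ref{sec2.2}. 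Everything then reduces to the Fourier-coefficient formula in Theorem~\ref{Ikeda1}.

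Next I would work through the $p$-adic valuations of the factors in
\[
a(E_{k,\boldsymbol{K}}^{(r')},H)=2^r\prod_{i=0}^{r-1}\!\left(-\frac{B_{k-i,\chi_{\boldsymbol{K}}^i}}{k-i}\right)^{\!-1}\!\prod_{q}\mathcal{F}_q(H,q^{k-2r}).
\]
As $i$ ranges over $\{0,\dots,m\}$, the values $k-i=m+p-1-i$ sweep through $[p-1,m+p-1]$, and because $p>m+3$ the only elements of this interval divisible by $p$ or $p-1$ are $p$ (at $i=m-1$) and $p-1$ (at $i=m$). For $i=m$ the character $\chi_{\boldsymbol{K}}^m$ is trivial, and von Staudt--Clausen gives $v_p(B_{p-1})=-1$, so the factor $-(p-1)/B_{p-1}$ has $v_p=+1$. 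For $i=m-1$ the character is $\chi_{\boldsymbol{K}}$, and Kummer's congruence $B_{p,\chi_{\boldsymbol{K}}}/p\equiv (1-\chi_{\boldsymbol{K}}(p))B_{1,\chi_{\boldsymbol{K}}}\pmod{p}$ (legal because $p\nmid D_{\boldsymbol{K}}$) pins down $v_p(B_{p,\chi_{\boldsymbol{K}}})$. All other Bernoulli factors are $p$-units, and each local polynomial $\mathcal{F}_q(H,q^{k-2r})$ is a $p$-integer since $k-2r\ge p-m-3>0$; moreover $\mathcal{F}_p(H,X)=1$ whenever $p\nmid\gamma(H)$.

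For H-(I)${}_p$, at rank $r=m+1$ the index $i=m$ is inside the Bernoulli product, its $+1$ valuation propagates, and the remaining factors are $p$-integral, so $v_p(a(G_k^{(m+1)},H))\ge 1$ for every $H>0$ of size $m+1$.

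The main obstacle is H-(II)${}_p$. At rank $r=m$ the index $i=m$ falls outside the product and the von Staudt--Clausen $p$ is lost; the hypothesis $p\nmid\det H^{(m)}$ combined with $p\nmid D_{\boldsymbol{K}}$ only makes $\gamma(H^{(m)})$ a $p$-unit and forces $\mathcal{F}_p(H^{(m)},X)=1$, which eliminates rather than creates a $p$-contribution. The required divisibility must instead be squeezed out of the $i=m-1$ factor together with the product $\prod_{q\ne p}\mathcal{F}_q(H^{(m)},q^{p-1})$. I expect the route is to renormalize the two Eisenstein series by the $p$-integral quantity $B_{p,\chi_{\boldsymbol{K}}}/p$ (this kills the potential negative-valuation pole at $i=m-1$, is $\Phi$-compatible, and leaves the $i=m$ contribution to H-(I) intact), then to invoke Fermat's little theorem $q^{p-1}\equiv 1\pmod p$ to identify each $\mathcal{F}_q(H^{(m)},q^{p-1})$ mod $p$ with $\mathcal{F}_q(H^{(m)},1)$, and finally to extract the factor of $p$ from a Hecke-type identity at $X=1$. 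Translating the Siegel argument of \cite{N} into the Hermitian setting, with $\chi_{\boldsymbol{K}}$ now appearing systematically in the odd-indexed Bernoulli factors, will be the technical core.
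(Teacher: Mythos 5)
Your construction ($p$-normalized Hermitian Eisenstein series of weight $k=m+p-1$) and your treatment of {\rm H-(I)}${}_p$ match the paper: after dividing out the $p$-content of the Bernoulli product over $i=0,\dots,m-1$ (the paper's constant $C_p$, which is $\le 0$ by Carlitz), the extra factor $\bigl(-B_{p-1}/(p-1)\bigr)^{-1}$ appearing at rank $m+1$ has $p$-valuation $+1$ by von Staudt--Clausen, and the local product is integral. That part is essentially the paper's argument.

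The genuine gap is {\rm H-(II)}${}_p$, which you flag as the main obstacle but do not prove, and the route you sketch is not the right one. First, a computational slip: at rank $r=m$ the local factors are evaluated at $q^{k-2m}=q^{p-m-1}$, not $q^{p-1}$, so Fermat identifies $\mathcal{F}_q(H,q^{k-2m})$ mod $p$ with $\mathcal{F}_q(H,q^{-m})$, not with $\mathcal{F}_q(H,1)$; there is no Hecke-type identity at $X=1$ to exploit, and renormalizing by $B_{p,\chi_{\boldsymbol{K}}}/p$ cannot help since that quantity can be (and in the paper's $m=2$ example is) a $p$-unit. Second, and decisively, the hypothesis $m\equiv 2\pmod 4$ --- which your proposal never uses beyond the parity of $k$ --- is exactly what produces the factor of $p$, via a forced vanishing at the symmetric point of Ikeda's functional equation. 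Since $[m/2]=m/2$ is odd and $\det H>0$, one has $\gamma(H)=(-D_{\boldsymbol{K}})^{m/2}\det(H)<0$, hence $\underline{\chi}_{\boldsymbol{K},\infty}(\gamma(H))=-1$, and the product formula for the idele class character supplies a finite prime $q$ with $\underline{\chi}_{\boldsymbol{K},q}(\gamma(H))=-1$. Specializing
\[
\mathcal{F}_q(H,q^{-2m}X^{-1})=\underline{\chi}_{\boldsymbol{K},q}(\gamma(H))^{m-1}\,(q^mX)^{-\operatorname{ord}_q(\gamma(H))}\,\mathcal{F}_q(H,X)
\]
at the fixed point $X=q^{-m}$ gives $\mathcal{F}_q(H,q^{-m})=-\mathcal{F}_q(H,q^{-m})=0$ because $m-1$ is odd; since $p\nmid\det H$ and $p\nmid D_{\boldsymbol{K}}$ force $q\ne p$, Fermat then yields $\mathcal{F}_q(H,q^{k-2m})\equiv\mathcal{F}_q(H,q^{-m})=0\pmod p$, which is the whole content of {\rm H-(II)}${}_p$. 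Without this functional-equation step your argument does not close.
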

\begin{proof}
First of all, we construct $G_k^{(m)}\in M_k(\Gamma_{\boldsymbol{K}}^{(m)})_{\mathbb{Z}_{(p)}}$.
We set
$$
C_p:=
\text{ord}_p\left(\prod_{i=0}^{m-1}\left(-\frac{B_{k-i,\chi_{\boldsymbol{K}}^i}}{k-i}\right)^{-1}\right).
$$
From $D_{\boldsymbol{K}} \not\equiv 0\pmod{p}$, we see that $C_p\in\mathbb{Z}_{\leq 0}$
(see Carlitz \cite{Ca}, Theorem 1).
We set
$$
G_k^{(m)}:=p^{-C_p}\cdot E_{k,\boldsymbol{K}}^{(m)},
$$
where $E_{k,\boldsymbol{K}}^{(m)}$ is Hermitian Eisenstein series. From the choice of 
$C_p$,
we see that all Fourier coefficients of $G_k^{(m)}$ are $p$-integral. Specifically,
$G_k^{(m)}\in M_k(\Gamma_{\boldsymbol{K}}^{(m)})_{\mathbb{Z}_{(p)}}$.
We shall show that, if $H\in \Lambda_m^+(\mathcal{O}_{\boldsymbol{K}})$ satisfies 
$\text{det}(H)\not\equiv 0 \pmod{p}$, then
$$
a(G_k^{(m)},H) \equiv 0 \pmod{p}.
$$ 
The key result to show this is Ikeda's functional equation for $\mathcal{F}_q(H,X)$ (Ikeda \cite{Ikeda}).
With $m \equiv 2 \pmod{4}$, we see that $\gamma(H)<0$, and thus 
$\underline{\chi}_{\boldsymbol{K},\infty}(\gamma(H))=-1$. By the product formula for
the idele class character, there is a prime number $q$ such that 
$\underline{\chi}_{\boldsymbol{K},q}(\gamma(H))=-1$. Now, we apply Ikeda's functional
equation for $\mathcal{F}_q(H,X)$, which asserts that
$$
\mathcal{F}_q(H,q^{-2m}X^{-1})=\underline{\chi}_{\boldsymbol{K},q}(\gamma(H))^{m-1}
               (q^mX)^{-\text{ord}_q(\gamma(H))}\mathcal{F}_q(H,X)
$$
for $H\in\Lambda_m(\mathcal{O}_{\boldsymbol{K},q})$ with $\text{det}(H)\ne 0$. From
this equation, we obtain $\mathcal{F}_q(H,q^{-m})=0$. Since $\text{det}(H)\not\equiv 0\pmod{p}$,
we see that $\gamma(H) \not\equiv 0\pmod{p}$ (it should be noted that 
$D_{\boldsymbol{K}} \not\equiv 0 \pmod{p}$). This implies
\begin{align*}
 \prod_{q|\gamma(H)}\mathcal{F}_q(H,q^{k-2m})&=\prod_{q|\gamma(H)}\mathcal{F}_q(H,q^{p-m-1})\\
   & \equiv \prod_{q|\gamma(H)}\mathcal{F}_q(H,q^{-m})=0 \pmod{p}.                         
\end{align*}
Since
$$
p^{-C_p}\cdot \prod_{i=0}^{m-1}\left(-\frac{B_{k-i,\chi_{\boldsymbol{K}}}^i}{k-i}\right)^{-1}
\in \mathbb{Q}\cap \mathbb{Z}_p^\times,
$$
we obtain
\begin{align*}
a(G_k^{(m)},H) &= (\text{$p$-adic unit})\cdot  \prod_{q|\gamma(H)}\mathcal{F}_q(H,q^{k-2m})\\
              & \equiv 0 \pmod{p}.
\end{align*}

Secondly, we construct $G_k^{(m+1)}$. We set
$$
G_k^{(m+1)}:=p^{-C_p}\cdot E_{k,\boldsymbol{K}}^{(m+1)}
$$
where $C_p$ is the constant used in the definition of $G_k^{(m)}$. The fact
$\Phi(E_{k,\boldsymbol{K}}^{(m+1)})=E_{k,\boldsymbol{K}}^{(m)}$ implies
$\Phi(G_k^{(m+1)})=G_k^{(m)}$. The Fourier coefficient $a(G_k^{(m+1)},H)$ for 
$H\in\Lambda_{m+1}^+(\mathcal{O}_{\boldsymbol{K}})$ is expressed as
$$
2^{m+1}\cdot p^{-C_p}\cdot\prod_{i=0}^{m}\left(-\frac{B_{k-i,\chi_{\boldsymbol{K}}^i}}{k-i}\right)^{-1}\cdot
       \prod_{q:{\rm prime}} \mathcal{F}_q(H;q^{k-2(m+1)} ).
$$
Since the last product $\prod\mathcal{F}_q(\cdots)$ is in $\mathbb{Z}$ (note that $p>m+3$), 
it suffices to show that the rational number
$$
D:=p^{-C_p}\cdot \prod_{i=0}^{m}\left(-\frac{B_{k-i,\chi_{\boldsymbol{K}}^i}}{k-i}\right)^{-1}
$$
is divisible by $p$. We note that
$$
D=p^{-C_p}\cdot\prod_{i=0}^{m-1}\left(-\frac{B_{k-i,\chi_{\boldsymbol{K}}^i}}{k-i}\right)^{-1}
\cdot
\left(-\frac{B_{p-1}}{p-1}\right)^{-1}.
$$
By the theorem von Staudt and Clausen and the definition of $C_p$, we obtain
$$
\text{ord}_p(D)=0-\text{ord}_p\left(\frac{B_{p-1}}{p-1}\right)=1.
$$
This shows that
$$
a(G_k^{(m+1)},H) \equiv 0 \pmod{p}
$$
for all $H\in\Lambda_{m+1}^+(\mathcal{O}_{\boldsymbol{K}})$. This completes the proof.
\end{proof}
\section{Remark}
\label{sec3}
\subsection{Theta operators and mod $p$ singular forms}
\label{sec3.1}
The theta operator $\varTheta$ on the space of Hermitian modular forms is defined as
$$
\varTheta :\;F=\sum a(F,H)q^H\longrightarrow \varTheta(F):=\sum a(F,H)\cdot\text{det}(H)q^H,
$$
which has its origins in Ramanujan's theta operator in the case of elliptic modular forms.
The content of {\rm H-(II)}${}_p$ in Theorem \ref{main} can be paraphrased as
$$
\varTheta (G_k^{(m)}) \equiv 0 \pmod{p}.
$$
In this case, we say that $G_k^{(m)}$ is an element of {\it the mod $p$ kernel} of the theta opeartor.

If $F\in M_k(\Gamma_{\boldsymbol{K}}^{(m)})_{\mathbb{Z}_{(p)}}$ satisfies that
$a(F,H) \equiv 0 \pmod{p}$ for all $H\in \Lambda_m^+(\mathcal{O}_{\boldsymbol{K}})$,
then we say that $F$ is {\it a mod $p$ singular form}. The content of {\rm H-(I)}${}_p$
in Theorem \ref{main}
is rephrased as $G_k^{(m+1)}$ is a mod $p$ singular form. Additionally, if $F\in M_k(\Gamma_{\boldsymbol{K}}^{(m)})_{\mathbb{Z}_{(p)}}$ is a mod $p$ singular form, then $F$ is an
element of the mod $p$ kernel of theta operator. We call {\it essential} if $F$ a modular form
$F$ is included in the mod $p$ kernel, but it is not mod $p$ singular. It is known that $G_k^{(m)}$
is an essential form. In fact, $a(G_k^{(m)},H) \not\equiv 0 \pmod{p}$ for $H\in \Lambda_m^+(\mathcal{O}_{\boldsymbol{K}})$ with $\gamma(H)=-p$.

From Theorem \ref{main}, the following is expected: If $G\in M_k(\Gamma_{\boldsymbol{K}}^{(m)})_{\mathbb{Z}_{(p)}}$
is an essential form, then there exists some mod $p$ singular form 
$G^{\uparrow}\in M_k(\Gamma_{\boldsymbol{K}}^{(m+1)})_{\mathbb{Z}_{(p)}}$ satisfyng $\Phi(G^{\uparrow})=G$.
\subsection{Theta series for Hermitian lattices}
\label{sec3.2}
We can construct Hermitian modular forms in the mod $p$ kernel of the theta operator using
the theta series.

For a positive Hermitian lattice $\mathcal{L}$ of rank $r$, we associate the Hermitian theta series
$$
\vartheta_{\mathcal{L}}^{(m)}(Z)
=\vartheta_{H}^{(m)}(Z)
=\sum_{X\in \mathcal{O}^{(r,m)}}\text{exp}(\pi i\text{tr}(H[X]Z)),\;Z\in\mathcal{H}_m,
$$
where $H$ is the corresponding Gram matrix of $\mathcal{L}$.

In the following, we assume that $\boldsymbol{K}=\mathbb{Q}(i)$. We denote
the set of
even integral, positive definite unimodular Hermitian matrices of rank $r$
by $\mathcal{U}_r(\mathcal{O}_{\boldsymbol{K}})=\mathcal{U}_r(\mathbb{Z}[i])$. 
It is known that
$4|r$ and that there is a unimodular Hermitian lattice 
$\mathcal{L}_{\mathbb{C}}\in \mathcal{U}_{12}(\mathbb{Z}[i])$, which does not have any
vector of length one. The transfer of this lattice to $\mathbb{Z}$ is the Leech lattice
(cf. http://www.math.uni-sb.de/ag/schulze/Hermitian-lattices/). For this lattice, we have
$$
\left.\vartheta_{\mathcal{L}_{\mathbb{C}}}^{(2)}\right|_{\mathbb{S}_2}=\vartheta_{\text{Leech}}^{(2)},
$$
that is, the resriction of Hermitian theta series $\vartheta_{\mathcal{L}_{\mathbb{C}}}^{(2)}$ to the
Siegel upper half space $\mathbb{S}_2$ coincides with the Siegel theta series $\vartheta_{\text{Leech}}^{(2)}$
attached to the Leech lattice. We fix the lattice $\mathcal{L}_{\mathbb{C}}$ and call it the Hermitian
Leech lattice. It is known that 
$\vartheta_{\mathcal{L}_{\mathbb{C}}}^{(2)}\in M_{12}(\Gamma_{\mathbb{Q}(i)}^{(2)})_{\mathbb{Z}}$ and
$$
\Theta (\vartheta_{\mathcal{L}_{\mathbb{C}}}^{(2)}) \equiv 0 \pmod{11}
$$
(cf. \cite{K-N}, Theorem 8). It should be noted that the above defined $G_{12}^{(2)}$ also satisfies
$\Theta (G_{12}^{(2)}) \equiv 0 \pmod{11}$ ($m=2$, $p=11$). From the prediction in the previous
section, it is expected that $\vartheta_{\mathcal{L}_{\mathbb{C}}}^{(3)}$ is a mod $11$ singular
form, that is
$$
a(\vartheta_{\mathcal{L}_{\mathbb{C}}}^{(3)},H) \equiv 0 \pmod{11}
$$
for all $H\in \Lambda_3^+(\mathbb{Z}[i])$.
\subsection{Example}
\label{sec3.3}
In this section, we provide an example of the main theorem when $m=2$.

As mentioned in the Introduction, we fix a prime number $p>5$ satisfying 
$$
\chi_{\boldsymbol{K}}(p)=-1\quad \text{and}\quad h_{\boldsymbol{K}}\not\equiv 0
\pmod{p}.
$$
In this case, $k=m+p-1=p+1$, and the Hermitian Eisenstein series
$E_{p+1,\boldsymbol{K}}^{(3)}$ and $E_{p+1,\boldsymbol{K}}^{(2)}$ can be taken
as $G_{k,\boldsymbol{K}}^{(m+1)}$ and $G_{k,\boldsymbol{K}}^{(m)}$ of the main theorem.
That is, we obtain
\vspace{2mm}
\\
{\rm Ex-H-(I)}${}_p{\rm :}\;\;a(E_{p+1,\boldsymbol{K}}^{(3)},H^{(3)}) \equiv 0 \pmod{p}$ for all $H^{(3)}>0$,
\\
{\rm Ex-H-(II)}${}_p{\rm :}$\\
\hspace{8mm} $a(E_{p+1,\boldsymbol{K}}^{(2)},H^{(2)}) \equiv 0 \pmod{p}$ for $H^{(2)}>0$ with 
${\rm det}(H^{(2)}) \not\equiv 0 \pmod{p}$.
\vspace{2mm}
\\
\quad In this case, the constant $C_p$ appearing in the proof of the main theorem
is given by
$$
C_p=\text{ord}_p\left(\left(-\frac{p+1}{B_{p+1}}\right)\cdot
\left(-\frac{p}{B_{p,\chi_{\boldsymbol{K}}}} \right)  \right).
$$
From the way of proof, it is sufficient to show that this value is zero.

By Kummer's congruence relation, we obtain
\vspace{1mm}
\\
$\bullet\displaystyle \quad \frac{B_{p+1}}{p+1} \equiv \frac{B_2}{2}=\frac{1}{12} \pmod{p}$,
\vspace{1mm}
\\
$\bullet\displaystyle \quad \frac{B_{p,\chi_{\boldsymbol{K}}}}{p} 
\equiv (1-\chi_{\boldsymbol{K}}(p))B_{1,\chi_{\boldsymbol{K}}}
       =(1-\chi_{\boldsymbol{K}}(p))\frac{-2h_{\boldsymbol{K}}}{w_{\boldsymbol{K}}}
       \pmod{p}$,
\vspace{2mm}
\\
where $w_{\boldsymbol{K}}$ is the order of the unit group of $\boldsymbol{K}$.
By the assumption on $p$, both of the above two values are $p$-adic units.
Therefore, we obtain $C_p=0$.

Additionally, we obtain
\vspace{1mm}
\\
{\rm Ex-H-(III)}${}_p{\rm :}$\\
\hspace{8mm} $a(E_{p+1,\boldsymbol{K}}^{(1)},h) \equiv 0 \pmod{p}$ for $h\in\mathbb{Z}_{>0}$ with 
$\sigma_1(h) \equiv 0 \pmod{p}$,
\vspace{1mm}
\\
where $\sigma_1(n)$ is the sum of divisors of $n$, that is, $\sigma_1(n)=\sum_{0<d|n}d$. 

\end{document}